\documentclass[12pt]{article}
\usepackage{xcolor}
\usepackage{amsmath,amssymb,amsfonts}
\usepackage{amsthm}
\usepackage{mathrsfs}
\usepackage{graphicx}
\usepackage[verbose]{hyperref}
\hypersetup{colorlinks=false,allbordercolors=blue,pdfborderstyle={/S/U/W 1}}

\newcommand{\R}{\mathbb{R}}
\newcommand{\fr}{\displaystyle\frac}
\newcommand{\jf}{\displaystyle\int}
\newcommand{\lt}{\left}
\newcommand{\rt}{\right}
\newcommand{\be}{\begin{equation}}
\newcommand{\ee}{\end{equation}}
\newcommand{\bee}{\begin{equation*}}
\newcommand{\eee}{\end{equation*}}

\newtheorem{theorem}{Theorem}

\newtheorem{remark}[theorem]{Remark}

\begin{document}

\title{Radial Symmetry and Strict Radial Decrease for Master Equations 
with Decreasing Radial Potentials}

\author{Yahui Niu}

\date{}

\maketitle

\begin{abstract}
We study the master equation
\begin{equation*}
(\partial_t -\Delta)^{s} u(x,t) = R(|x|)f(u(x,t))\quad\mbox{in}\ \mathbb{R}^n\times\mathbb{R},
\end{equation*}
where $s\in(0,1)$, $R$ is positive and strictly decreasing in the radial variable, and $f$ is positive, locally Lipschitz, and satisfies $f'(0)<0$. Existing symmetry results for this operator require $f(0)=0$ with $f'(0)\ge 0$ (or, more generally, that $f$ be non-decreasing near the origin), because the sign of the moving-plane comparison inequality is then controlled by the nonlocal diffusion term. When $f'(0)<0$, the linear term in the comparison inequality appears at the same order as the nonlocal diffusion but with the opposite sign, destroying the comparison principle underlying the standard cut-off perturbation argument. We prove that, despite this obstruction, every positive bounded classical solution with uniform spatial decay is radially symmetric and strictly radially decreasing in $x$ for each $t$. The proof reconciles the direct method of moving planes with this adverse sign by exploiting the interplay between the spatial decay of $u$ and the strict radial monotonicity of $R$. This appears to be the first symmetry result for the master operator that accommodates a local damping mechanism at the origin.
\end{abstract}

\noindent\textbf{Keywords:} master equation; direct method of moving planes; radial symmetry; strictly radially decreasing solutions

\noindent\textbf{Mathematics Subject Classification 2020:} 35R11, 35B06, 35B50, 47G30

\section{Introduction}

This paper establishes the radial symmetry and strict radial decrease of positive bounded classical solutions to the master equation
\begin{equation}\label{2.0}
(\partial_t -\Delta)^{s} u(x,t) =  R(|x|)f(u(x,t))\quad\mbox{in}\ \ \mathbb{R}^n\times\mathbb{R}.
\end{equation}
Here, $R(|x|)\in C([0,+\infty))$ is positive and strictly decreasing in $|x|$, and $f$ is positive, locally Lipschitz on $[0,\infty)$, and of class $C^1$ in a neighborhood of $0$ with
\begin{equation*}
f'(0)<-\sigma\quad\text{for some constant}\ \sigma>0.
\end{equation*}

The master operator $(\partial_t-\Delta)^s$, also known as the fully fractional heat operator, is a nonlocal pseudo-differential operator of order $s$ in time and $2s$ in space. It is defined by the singular integral
\begin{equation}\label{nonlocaloper}
(\partial_t-\Delta)^s u(x,t):=C_{n,s}\int_{-\infty}^{t}\int_{\mathbb{R}^n}\frac{u(x,t)-u(y,\tau)}{(t-\tau)^{\frac{n}{2}+1+s}}e^{-\frac{|x-y|^2}{4(t-\tau)}}\,\mathrm{d}y\,\mathrm{d}\tau,
\end{equation}
where $0<s<1$ and $C_{n,s}=\frac{1}{(4\pi)^{\frac{n}{2}}|\Gamma(-s)|}$. Two features distinguish this operator: it is nonlocal in both space and time, and it interpolates between classical operators \cite{FNW, ST}---formally reducing to $\partial_t - \Delta$ as $s\to 1$, to the fractional Laplacian $(-\Delta)^s$ when acting on functions of $x$ alone, and to the Marchaud derivative $\partial_t^s$ when acting on functions of $t$ alone.

To ensure that the singular integral in \eqref{nonlocaloper} is well-defined, we require sufficient regularity and controlled growth at infinity. We say that $u$ is a classical entire solution of \eqref{2.0} if
$$u\in \mathcal{L}(\mathbb{R}^n\times\mathbb{R})\cap C^{2s+\varepsilon,s+\varepsilon/2}_{x,t,\mathrm{loc}}(\mathbb{R}^n\times\mathbb{R})$$
for some $\varepsilon>0$, where $\mathcal{L}(\mathbb{R}^n\times\mathbb{R})$ is the space of slowly increasing functions for which the kernel in \eqref{nonlocaloper} is integrable against $|u|$ (see \cite{CG}), and $C^{2s+\varepsilon,s+\varepsilon/2}_{x,t,\mathrm{loc}}$ is the local parabolic H\"older space defined in \cite{Kry}.

Equations driven by $(\partial_t-\Delta)^s$ model anomalous diffusion with memory and long-range interactions, arising in continuous-time random walks \cite{KBS, MK}, plasma turbulence \cite{DCL1, Zaslavsky}, and magneto-thermoelasticity \cite{EE}. In many settings the medium is spatially non-uniform, and \eqref{2.0} incorporates a radial potential $R(|x|)$ that decays outward from a distinguished center. A natural question is whether solutions inherit the radial symmetry of the medium.

\textbf{A gap in the theory.} Despite growing literature on symmetry for nonlocal parabolic equations, existing results for the master operator $(\partial_t-\Delta)^s$ implicitly require that the nonlinearity satisfy $f(0)=0$ with $f'(0)\ge 0$ (or, more generally, that $f$ be non-negative and non-decreasing near the origin). This assumption is structural: it governs the sign of the differential inequality satisfied by the reflected solution and ensures that negative minima propagate in the correct direction. Consequently, nonlinearities exhibiting a local damping mechanism at low densities---that is, those with $f'(0)<0$---have remained entirely outside the scope of current symmetry theory. The present paper closes this gap.

The condition $f'(0)<-\sigma<0$ arises in physically relevant settings, including population dynamics with the strong Allee effect \cite{Courchamp2008} and combustion with Newtonian cooling \cite{Bebernes1989}. In these models the negative derivative provides a local damping mechanism that will play a central role in our moving-planes argument.

\textbf{Relation to existing methods.} For the fractional Laplacian, the direct method of moving planes was introduced by Chen, Li, and Li \cite{CLL1} and has since been widely extended (see, e.g., \cite{CL, CL2, CLL, CLZ, CM, CWNH}). For the master operator itself, Chen and Guo \cite{CG} achieved a breakthrough by introducing a space-time cut-off perturbation that forces the attainment of a minimum, thereby circumventing the difficulty that minimizing sequences may escape to infinity. More recently, Lu and Yu \cite{LY} combined this cut-off technique with the direct method of moving planes for equations with directionally increasing potentials and nonlinearities satisfying $f(0)=0$ and $f'(0)=0$. In their framework the vanishing of $f'(0)$ is a \textbf{structural prerequisite}: it guarantees that $f(u_\lambda)-f(u)$ is of higher order in the reflection difference $w_\lambda$, allowing the negative term from the master operator to dominate. When $f'(0)<0$, this higher-order cancellation is destroyed. The linear term $f'(0)w_\lambda$ now appears at the \textbf{same order} as the nonlocal diffusion but with the \textbf{opposite sign}, rendering the existing comparison argument inapplicable. Overcoming this obstruction demands not a technical adjustment but a \textbf{reconciliation of the moving-plane machinery with an adverse sign structure}---a challenge that has remained unresolved and that we address here.

\textbf{Main result.} We establish that solutions of \eqref{2.0} fully inherit the radial geometry of the equation.

\begin{theorem}\label{thm1}
Let $u\in C^{2s+\varepsilon,s+\varepsilon/2}_{x,t,\mathrm{loc}}(\mathbb{R}^n\times\mathbb{R})\cap \mathcal{L}(\mathbb{R}^n\times\mathbb{R})$ for some $\varepsilon>0$ be a positive bounded classical solution of
\begin{equation*}
(\partial_t -\Delta)^{s} u = R(|x|)f(u)
\quad \text{in}\ \mathbb{R}^n\times\mathbb{R},
\end{equation*}
where $s\in(0,1)$. Assume that $u$ is uniformly continuous and satisfies
\begin{equation}\label{cu}
\lim_{|x|\to+\infty}u(x,t)=0\quad\text{uniformly in } t\in\mathbb{R}.
\end{equation}
Suppose further that $R(r)>0$ for all $r\ge 0$, $R$ is continuous and strictly decreasing in $r$, $f>0$ is locally Lipschitz on $[0,\infty)$, $f\in C^1([0,\delta])$ for some $\delta>0$, and
\begin{equation}\label{cf}
f'(0)<-\sigma\quad\text{for some constant}\ \sigma>0.
\end{equation}
Then for each $t\in\mathbb{R}$, $u(\cdot,t)$ is radially symmetric and strictly radially decreasing with respect to the origin. Equivalently, for any unit vector $e\in\mathbb{R}^n$, $u(x,t)$ is symmetric about the plane $\{x\cdot e=0\}$ and strictly decreasing along rays emanating from the origin.
\end{theorem}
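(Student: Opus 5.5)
We run the direct method of moving planes in an arbitrary direction $e$, say $e=e_1$. Set $T_\lambda=\{x_1=\lambda\}$, $\Sigma_\lambda=\{x_1<\lambda\}$, $x^\lambda=(2\lambda-x_1,x_2,\dots,x_n)$, $u_\lambda(x,t)=u(x^\lambda,t)$ and $w_\lambda=u_\lambda-u$. For $\lambda<0$ and $x\in\Sigma_\lambda$ we have $|x^\lambda|<|x|$, so $R(|x^\lambda|)>R(|x|)$ by strict monotonicity of $R$. Subtracting the two equations gives
\begin{equation*}
(\partial_t-\Delta)^s w_\lambda = R(|x^\lambda|)\bigl(f(u_\lambda)-f(u)\bigr)+\bigl(R(|x^\lambda|)-R(|x|)\bigr)f(u).
\end{equation*}
The last term is \emph{strictly positive}, since $f>0$. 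This is the key observation. We do not need the linear term to have a good sign; we only need it to be dominated at points where $w_\lambda$ is negative and small, or where $u$ is small.

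\textbf{Step 1 (start far to the left).} We show that $w_\lambda\ge0$ in $\Sigma_\lambda\times\mathbb{R}$ for $\lambda$ sufficiently negative. Suppose $\inf w_\lambda=-m<0$. Because the infimum may fail to be attained, we use the Chen--Guo cut-off perturbation. We take a minimizing sequence $(x^k,t_k)$ and set $\tilde w=w_\lambda-\varepsilon_k\eta_k$, where $\eta_k$ is a space-time bump centered at $(x^k,t_k)$ with $(\partial_t-\Delta)^s\eta_k$ bounded. The minimum $(\bar x^k,\bar t_k)$ of $\tilde w$ is attained in $\Sigma_\lambda$ near $(x^k,t_k)$. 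By antisymmetry of $w_\lambda$ and the standard kernel comparison for the reflection, at this minimum
\begin{equation*}
(\partial_t-\Delta)^s \tilde w\le C\,(-m)\,\text{dist}(\bar x^k,T_\lambda)^{-2s}\ \ (\text{negative}).
\end{equation*}
On the other side, $u_\lambda<u$ at this point. For $\lambda$ very negative both values lie in $[0,\delta]$ by \eqref{cu}, so $f(u_\lambda)-f(u)=f'(\xi)w_\lambda\ge -C\,w_\lambda\cdot$ sign-adjusted. Since $f'<0$ there and $w_\lambda<0$, we in fact get $f(u_\lambda)-f(u)>0$. The adverse sign is therefore harmless on this side: all terms on the right are positive, while the left side is $\le O(\varepsilon_k)-cm\,\text{dist}^{-2s}<0$. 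Letting $k\to\infty$ gives a contradiction. This uses only the decay of $u$ and the positivity of the $R$-difference term.

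\textbf{Step 2 (move to $\lambda_0=\sup\{\lambda\le0: w_\mu\ge0\ \forall\mu\le\lambda\}$, and show $\lambda_0=0$).} Suppose $\lambda_0<0$. We take $\lambda_k\searrow\lambda_0$ with $\inf w_{\lambda_k}=-m_k<0$, and perturbed minima $(x^k,t_k)$. There are two regimes.

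If $|x^k|\to\infty$ along a subsequence, then $u$ and $u_{\lambda_k}$ at $x^k$ lie in $[0,\delta]$, and the Step 1 sign argument applies verbatim to give a contradiction.

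If $x^k$ stays bounded, we translate in time, $u_k(x,t)=u(x,t+t_k)$, and use boundedness, uniform continuity and the Hölder regularity (Arzelà–Ascoli) to pass to a limit solution $u_\infty$ of the same equation. For the limit, $w^\infty_{\lambda_0}\ge0$ and $\min w^\infty_{\lambda_0}=0$ is attained at $\bar x=\lim x^k$. The distance from $x^k$ to $T_{\lambda_k}$ is bounded below, because near the plane $w$ is small. To see this, divide the inequality by $m_k$, using the Lipschitz bound $|f(u_\lambda)-f(u)|\le L m_k$ and the kernel term of order $m_k\,\text{dist}^{-2s}$, which dominates $Lm_k$ when the distance is small. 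Hence $\bar x\in\Sigma_{\lambda_0}$ is interior. At that point
\begin{equation*}
(\partial_t-\Delta)^s w^\infty_{\lambda_0}(\bar x,0)\le0,
\end{equation*}
while the right side is at least $(R(|\bar x^{\lambda_0}|)-R(|\bar x|))f(u_\infty)>0$. The strict positivity survives the limit because $\bar x$ is bounded and $f(u_\infty)\ge\min_{[0,\|u\|_\infty]}f>0$. This is a contradiction, so $\lambda_0=0$.

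\textbf{Step 3 (conclusion).} Doing the same in direction $-e$ gives $w_0\equiv0$, i.e. symmetry about every hyperplane through the origin, hence radial symmetry. For strict decrease, $w_\lambda\ge0$ for $\lambda<0$. If $w_\lambda(x,t)=0$ at an interior point, the left side of the equation would be $\le0$ there while the right side equals $(R(|x^\lambda|)-R(|x|))f(u)>0$. So $w_\lambda>0$ in $\Sigma_\lambda$, which yields strict radial monotonicity.

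\textbf{Main obstacle.} The expected hardest step is the bounded-minimum regime in Step 2, where $u$ is not small and the Lipschitz term has no sign. We must extract a limit equation, keep the minimizer away from $T_{\lambda_0}$, and make sure the strictly positive $R$-difference term does not degenerate in the limit. We would handle this through the compactness in time translations together with the quantitative lower bound on the $R$-difference on compact sets away from the plane.
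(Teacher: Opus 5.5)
\textbf{Gap: the limit equation in the bounded regime of Step~2.} Your architecture is the paper's: moving planes, a cut-off perturbation to force a minimum, the sign of $f'(0)$ where $u$ is small, and the strictly positive $R$-difference at the limiting position. Step~1, Step~3 and the unbounded regime of Step~2 are sound. The problem is the bounded regime of Step~2. There you pass to a time-translated limit $u_\infty$ and assert that it solves the same equation, so that $(\partial_t-\Delta)^s w^\infty_{\lambda_0}(\bar x,0)\le 0$ can be compared pointwise with $\big(R(|\bar x^{\lambda_0}|)-R(|\bar x|)\big)f(u_\infty)$.

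Boundedness and uniform continuity give, via Arzel\`a--Ascoli, only locally uniform convergence of $u(\cdot,\cdot+t_k)$. To pass the singular integral to the limit at a point, you would need $C^{2s+\varepsilon,s+\varepsilon/2}$ bounds that are uniform along the translates. The hypothesis is only \emph{local} H\"older regularity, which is not uniform as $t_k$ moves. With $R$ merely continuous, you also cannot expect a Schauder-type estimate to supply such bounds. So $u_\infty$ is at best a weak solution, and the pointwise contradiction at $(\bar x,0)$ is not justified as written.

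\textbf{How to close it.} You do not need the limit equation. Argue directly at the perturbed minimizers $(x^k,t_k)$, as the paper does in Step~2e; you already have every ingredient.
\begin{itemize}
\item At $(x^k,t_k)$ the left side is at most $-c\,m_k\,\mathrm{dist}(x^k,T_{\lambda_k})^{-2s}<0$.
\item Your decomposition bounds the right side below by $\big(R(|(x^k)^{\lambda_k}|)-R(|x^k|)\big)\min_{[0,\|u\|_\infty]}f-R(0)L\,m_k$.
\item Uniform continuity of $u$ together with $w_{\lambda_0}\ge 0$ forces $m_k\to 0$ along the bounded subsequence. This is the paper's Step~2a, and it is already implicit in your claim that $\min w^\infty_{\lambda_0}=0$.
\item Your distance bound (dividing by $m_k$), together with $x^k$ bounded, $x^k\to\bar x\in\Sigma_{\lambda_0}$ and $\lambda_0<0$, keeps the $R$-difference above a positive constant.
\end{itemize}
Hence the right side is positive for large $k$, a contradiction.

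With that replacement the proof is complete, and in places tidier than the paper's:
\begin{itemize}
\item Bounding $f$ below by its minimum on $[0,\|u\|_\infty]$ makes the paper's Step~2d superfluous.
\item Dividing by $m_k$ is the rigorous form of the paper's remark that the distance to the plane stays bounded below.
\item Running the argument in direction $-e$ makes the derivation of $w_0\equiv 0$ explicit.
\end{itemize}
As in the paper, first shrink $\delta$ so that $f'<0$ on all of $[0,\delta]$ before applying the mean value theorem.
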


\begin{remark}
The theorem is proved by selecting an arbitrary direction as the $x_1$-axis and applying the method of moving planes. We show that the solution is symmetric with respect to $\{x_1=0\}$ and strictly monotone increasing in the $x_1$-direction for $x_1<0$. Since the direction is arbitrary, the solution enjoys full radial symmetry and strict radial decrease.
\end{remark}

\begin{remark}
The assumptions on $f$ are satisfied, for instance, by $f(u)=1-\sigma u+u^2$ with $\sigma\in(0,2)$. This function is strictly positive for all $u\ge 0$, locally Lipschitz, and satisfies $f'(0)=-\sigma<0$.
\end{remark}

\textbf{Proof strategy and novelties.} The proof of Theorem \ref{thm1} employs the direct method of moving planes combined with the cut-off perturbation technique introduced in \cite{CG} and further developed in \cite{LY}. While the overall architecture follows these works, the execution at each critical step is tailored to the structure of \eqref{2.0}. The key distinctions are:

\begin{itemize}
\item \textbf{Initial step and boundedness.} For sufficiently negative $\lambda$, the spatial decay \eqref{cu} forces the solution to be small. The condition $f'(0)<0$ then guarantees that the coefficient in the differential inequality for the reflection difference $w_\lambda$ is strictly negative, yielding an immediate sign contradiction. No additional asymptotic condition on $R$ is required. The same mechanism also drives the boundedness estimate in Step~2, preventing the minimizing sequence from escaping to infinity. Moreover, the perturbation in Step~1 uses a time-only cut-off, simplifying the estimates compared to the space-time cut-offs in \cite{CG, LY}.

\item \textbf{Limiting position.} 
The strict decrease of $R(|x|)$ becomes decisive when reaching the limiting position. The difference $R(|x^\lambda|)-R(|x|)$ is strictly positive in $\Sigma_\lambda$ and remains bounded away from zero along the minimizing sequence, providing a non-vanishing positive term that leads to a contradiction when $\lambda_0<0$. This mechanism parallels the geometric term in \cite{CG} and the coefficient difference in \cite{LY}, but is driven here by radial monotonicity.
\end{itemize}

These adaptations demonstrate how the sign of $f'(0)$, the spatial decay of $u$, and the monotonicity of $R$ each enter the proof at the appropriate stage, and provide a template for studying symmetry in fully fractional parabolic equations with $f'(0)<0$. We also note that the proof that the infimum $a_k$ tends to zero (Step~2a) is handled by a direct algebraic splitting argument relying only on the uniform continuity of $u$, thereby avoiding the case distinction used in \cite{CG}.

\textbf{Broader outlook.} While Theorem \ref{thm1} is stated for \eqref{2.0}, the strategy---combining spatial decay with the sign of $f'(0)$ for the initial contradiction and the radial monotonicity of $R$ for the limiting contradiction---transcends this setting. The same framework should apply to master equations with fractional $p$-Laplacian diffusion and cooperative systems. We leave these extensions to future work and focus here on establishing the core phenomenon: \textbf{radial symmetry persists even under local damping at the origin.}

The remainder of the paper is organized as follows. Section \ref{Pre} collects the notation for the moving planes and a key scaling estimate for the master operator. Section \ref{Proof} contains the complete proof of Theorem \ref{thm1}, carried out in three steps via the moving planes method.

\section{Preliminaries}\label{Pre}
\subsection{Notation for the moving planes}

Let $\lambda \in \R$, $$T_\lambda := \{x \in \R^n \mid x_1 = \lambda\}$$
be a moving plane perpendicular to the $x_1$-axis,
$$\Sigma_\lambda := \{x \in \R^n \mid x_1 < \lambda\}$$ be the region to the left of the hyperplane $T_\lambda$, and
$$x^\lambda := (2\lambda - x_1, x_2, \ldots, x_n)$$
be the reflection of $x$ with respect to the hyperplane $T_\lambda$.

Assume that  $u$ is a solution of the fully fractional heat equation \eqref{2.0}. To compare the values of $u(x,t)$ with $u_\lambda(x, t) := u(x^\lambda, t)$, we define $$w_\lambda(x, t) := u_\lambda(x, t) - u(x, t).$$ It is evident that $w_\lambda(x, t)$ is an antisymmetric function of $x$ with respect to the hyperplane $T_\lambda$.

\subsection{A scaling estimate}\label{estimate}
In the proof of Theorem \ref{thm1}, we will repeatedly encounter integrals of the 
form
\[
I(x_o, t_o) := \int_{-\infty}^{t_o} \int_{\Sigma_{\lambda}} 
\frac{1}{(t_o - \tau)^{\frac{n}{2} + 1 + s}} 
e^{-\frac{|x_o - y^{\lambda}|^{2}}{4(t_o - \tau)}} 
\mathrm{d}y \mathrm{d}\tau,
\]
where \((x_o, t_o) \in \Sigma_{\lambda} \times \R\). 
Since the reflection
\(y \mapsto y^{\lambda}\) preserves the Lebesgue measure and maps
\(\Sigma_{\lambda}\) onto \(\Sigma_{\lambda}^{\mathrm{C}}\),
we obtain
\[
I(x_o, t_o) = \int_{-\infty}^{t_o} \int_{\Sigma_{\lambda}^C}
\frac{1}{(t_o - \tau)^{\frac{n}{2} + 1 + s}}
e^{-\frac{|x_o - y|^{2}}{4(t_o - \tau)}}
\,\mathrm{d}y\,\mathrm{d}\tau .
\]
To evaluate this integral, we make the change of variables 
$$
y=x_o+r z,\ \ \tau=t_o-r^2\rho
$$
with $r = \operatorname{dist}(x_o, T_{\lambda})$, then 
$\operatorname{d}\!y=r^n\operatorname{d}\!z$, $\operatorname{d}\!\tau=-r^2 \operatorname{d}\!\rho,$
and the domain $\Sigma_{\lambda}^C$ corresponds to $\{z\in\R^n\mid z_1>1\}$.
Consequently,
$$
\begin{aligned}
    I(x_o, t_o)
    &=\jf_{-\infty}^{t_o}\jf_{\Sigma_\lambda^C}\frac{1}{(t_o-\tau)^{\frac{n}{2}+1+s}}e^{-\frac{|x_o-y|^2}{4(t_o-\tau)}}\operatorname{d}\!y\operatorname{d}\!\tau   \\
    & =\frac{1}{r^{2s}}\jf_{0}^{+\infty}\jf_{\{z_1>1\}}\frac{1}{\rho^{\frac{n}{2}+1+s}}e^{-\frac{|z|^2}{4\rho}}
    \operatorname{d}\!z\operatorname{d}\!\rho. 
\end{aligned}
$$
 The double integral on the right-hand side is an absolute constant depending only on $n$ and $s$. Throughout the proof, we absorb this constant into generic constants denoted by $C,\ C_1,\ C_2,$ etc.

\section{Proof of Theorem \ref{thm1}}\label{Proof}
In this section, we present the proof of Theorem \ref{thm1}. 
By fixing an arbitrary direction as the $x_1$-axis, we apply 
the direct method of moving planes to show that every positive 
solution is strictly increasing in the $x_1$-direction for 
$x_1<0$. The proof is accomplished in three steps.

In Step 1, we show that for $\lambda$ sufficiently negative, it holds
\be \label{mp} w_\lambda(x,t) \geq 0, \;\forall\ (x,t)\in\Sigma_\lambda\times\R.\ee
This provides a starting point to move the plane.
\smallskip

In Step 2, we move the plane $T_\lambda$ along the $x_1$ direction as long as the above inequality holds.
We prove that this plane can be moved all the way to $\lambda = 0,$ that is, \eqref{mp} holds for all
real numbers $\lambda\leq 0$.
\smallskip

In Step 3, we further derive the strict inequality
$$  w_\lambda(x,t) > 0, \;\forall\ (x,t)\in\Sigma_\lambda\times\R,\;\forall\ \lambda\leq 0,$$
which implies that for each fixed $t$, $u(x,t)$ is strictly increasing in $x_1$ direction. 
\medskip

Now we carry out the details. 
\begin{proof}[Proof of Theorem \ref{thm1}] \,

\bigskip

It follows from equation \eqref {2.0},   the strict monotonicity of $R(|x|)$ and the positivity of 
 $f$ that
\be\label{maineq}
  \left.
\begin{array}{ll}
    (\partial_t-\Delta)^s w_\lambda(x,t)&= R(|x^\lambda|)f(u_\lambda)- R(|x|)f(u)\\[0.2cm]
    &=[R(|x^\lambda|)-R(|x|)]f(u_\lambda)+R(|x|)[f(u_\lambda)-f(u)]\\[0.2cm]
    &\geq R(|x|)[f(u_\lambda)-f(u)].\\[0.2cm]
\end{array}
\right.
\ee
 We want to show that
\begin{equation*}
w_\lambda (x,t) \geq 0, \;\forall (x,t)\in\Sigma_\lambda\times\R,\;\forall\lambda\leq0.
\end{equation*}

\noindent\textbf{Step 1.} We show that for $\lambda$ sufficiently negative, \eqref{mp} holds.

We argue by contradiction. Suppose that \eqref{mp} fails for some $\lambda$ that is sufficiently negative.
By the uniform spatial decay~\eqref{cu}, we can choose $\lambda$ so negative that $|x|>-\lambda$ is large
for all $x\in\Sigma_\lambda$, and consequently
\begin{equation}\label{smallu}
u(x,t)<\delta\qquad \forall\,(x,t)\in\Sigma_\lambda\times\R.
\end{equation}
(Indeed, by~\eqref{cu}, there exists $R_\delta>0$ such that $|x|>R_\delta$
implies $u(x,t)<\delta$. Take $\lambda < -R_\delta$; then for any $x\in\Sigma_\lambda$ we have $x_1<\lambda<-R_\delta$,
hence $|x|\ge|x_1|>R_\delta$, yielding $u(x,t)<\delta$.)

Now, because $u$ is bounded and $w_\lambda$ takes negative values somewhere in $\Sigma_\lambda\times\R$,
there exists a constant $m>0$ such that
\begin{equation}\label{NRP5}
\inf_{(x,t)\in \Sigma_\lambda\times \R}w_\lambda(x,t)=:-m<0.
\end{equation}
Since the domain is unbounded, a minimum may not be attained; we therefore use a perturbation argument.

By the definition of the infimum, there exists a sequence 
$\{(x^k,t_k)\}\subset\Sigma_\lambda\times\R$ such that
\begin{equation*}
  w_\lambda(x^k,t_k)=: -m_k \to -m \quad\text{as } k\to\infty.
\end{equation*}
Let $\varepsilon_k:=m-m_k$, then $\varepsilon_k>0$ and 
$\varepsilon_k\to 0$ as $k\to\infty$.

We claim that $\{x^k\}$ must be bounded. 
Indeed, if $|x^k|\to\infty$ along a subsequence, then 
by the uniform spatial decay \eqref{cu} we would have 
$u(x^k,t_k)\to0$ and $u_\lambda(x^k,t_k)\to0$, which forces 
$w_\lambda(x^k,t_k)\to0$, contradicting 
$w_\lambda(x^k,t_k)\to -m<0$. 
Therefore the negative infimum cannot be approached by sequences escaping to spatial infinity. The only obstacle that may prevent the infimum from being attained is the unboundedness of the time variable $t\in\R$. 
To overcome this, we introduce an auxiliary function that 
perturbs only in the time variable:
\begin{equation}\label{2.5}
  v_k(x,t):=w_\lambda(x,t)-\varepsilon_k\eta_k(t),
\end{equation}
where
$$\eta_k(t)=\eta\Big(\frac{t-t_k}{r_k^2}\Big)$$
with $r_k=\frac{1}{2}{\rm {dist}}(x^k,T_\lambda)$ and $\eta\in C_0^\infty(\R)$ is a smooth cut-off function satisfying
\begin{equation*}
\left\{\begin{array}{r@{\ \ }c@{\ \ }ll}
0\leq \eta\leq 1 &\mbox{in}&\ \ \R\,, \\[0.05cm]
\eta= 1 &\mbox{in}&\ \ [-1,1]\,, \\[0.05cm]
\eta= 0 &\mbox{in}&\ \ \R\backslash[-2,2]\,. \\[0.05cm]
\end{array}\right.
\end{equation*}

First, by the definition of $m_k$ and $\varepsilon_k$, we have
\begin{equation}\label{vk_at_xk}
  v_k(x^k,t_k)=w_\lambda(x^k,t_k)-\varepsilon_k=-m_k-m+m_k=-m.
\end{equation}
Second, if $|t-t_k|\geq2r^2_k$ and $x\in\Sigma_\lambda$, then it follows from \eqref{NRP5} that
\begin{equation}\label{vk_outside_strip}
  v_k(x,t)=w_\lambda(x,t)\geq-m.
\end{equation}
Third, the uniform spatial decay $\lim_{|x|\to\infty}u(x,t)=0$ implies 
$\lim_{|x|\to\infty}w_\lambda(x,t)=0$ uniformly in $t$, and consequently
\begin{equation}\label{vk_spatial_decay}
\lim_{|x|\to\infty}v_k(x,t) = -\varepsilon_k\eta_k(t) \ge -\varepsilon_k.
\end{equation}
Now we combine these three facts.
Since $\varepsilon_k \to 0$, we have $-\varepsilon_k > -m$ for all $k$ large.
Thus \eqref{vk_spatial_decay} implies $v_k(x,t) > -m$ for $|x|$ sufficiently large,
so the infimum of $v_k$ cannot be approached by sequences escaping to spatial infinity.
Outside the time strip $|t-t_k| \ge 2r_k^2$, \eqref{vk_outside_strip} gives $v_k \ge -m$,
while \eqref{vk_at_xk} already realizes the value $-m$ at $(x^k,t_k)$,
which lies inside that strip.
Hence the global minimum of $v_k$ over $\Sigma_\lambda \times \R$ must be attained in a
compact subset of $\Sigma_\lambda \times [t_k-2r_k^2,\, t_k+2r_k^2]$. Consequently, there exists $(\bar{x}^k,\bar{t}_k)$ in that set such that
\begin{equation}\label{NRP6}
 -m-\varepsilon_k\leq v_k(\bar{x}^k,\bar{t}_k)= \inf_{\Sigma_\lambda\times\R}v_k(x,t)\leq-m,
\end{equation}
and 
\begin{equation}\label{NRP7}
 -m\leq w_\lambda(\bar{x}^k,\bar{t}_k)\leq-m_k.
\end{equation}

We already know that $\{x^k\}$ is bounded.
If $|\bar{x}^k| \to \infty$ along a subsequence, then 
$w_\lambda(\bar{x}^k,\bar{t}_k) \to 0$ by the spatial decay \eqref{cu}, 
contradicting $w_\lambda(\bar{x}^k,\bar{t}_k) \le -m_k \to -m <0$.
Thus $\{\bar{x}^k\}$ is also bounded.
Consequently, there exists $M>0$ such that 
\begin{equation}\label{xk}
|\bar{x}^k - x^k| \le M\quad\text{for all large} k.
\end{equation} 

Recalling $r_k = \frac12 \operatorname{dist}(x^k, T_\lambda)$ and define
$\bar{r}_k := \operatorname{dist}(\bar{x}^k, T_\lambda)$, 
the $x_1$-coordinates satisfy 
$x_1^k = \lambda - 2r_k$ and $\bar{x}_1^k = \lambda - \bar{r}_k$. Hence
\[
\bar{r}_k - 2r_k = -(\bar{x}_1^k - x_1^k),
\]
combining with \eqref{xk}, we obtain
\[
2r_k - M \le \bar{r}_k \le 2r_k + M.
\]
In particular, $\bar{r}_k$ is also bounded from above and below by 
positive constants, and the two quantities $r_k^{2s}$ and $\bar{r}_k^{2s}$ 
are interchangeable up to a multiplicative constant.

In addition, starting from the definition of operator $(\partial_t-\Delta)^s$ as well as  the fact that 
$$|\bar{x}^k-y^\lambda|>|\bar{x}^k-y|\ \ \mbox{ for }y\in \Sigma_\lambda,$$  we arrive at
\begin{equation}\label{explain}
\left.\begin{array}{r@{\ \ }c@{\ \ }ll}
(\partial_t-\Delta)^s v_k(\bar{x}^k,\bar{t}_k)&=&C_{n,s}\jf_{-\infty}^{\bar{t}_k}\jf_{\R^{n}}\frac{v_k(\bar{x}^k,\bar{t}_k)-v_k(y,\tau)}{(\bar{t}_k-\tau)^{\frac{n}{2}+1+s}}e^{-\frac{|\bar{x}^k-y|^2}{4(\bar{t}_k-\tau)}}\operatorname{d}\!y\operatorname{d}\!\tau \\[0.3cm]
&=&C_{n,s}\jf_{-\infty}^{\bar{t}_k}\jf_{\Sigma_\lambda}\frac{v_k(\bar{x}^k,\bar{t}_k)-v_k(y,\tau)}{(\bar{t}_k-\tau)^{\frac{n}{2}+1+s}}e^{-\frac{|\bar{x}^k-y|^2}{4(\bar{t}_k-\tau)}}\operatorname{d}\!y\operatorname{d}\!\tau \\[0.3cm]&\,&\,+C_{n,s}\jf_{-\infty}^{\bar{t}_k}\jf_{\Sigma_\lambda}\frac{v_k(\bar{x}^k,\bar{t}_k)-v_k(y^\lambda,\tau)}{(\bar{t}_k-\tau)^{\frac{n}{2}+1+s}}e^{-\frac{|\bar{x}^k-y^\lambda|^2}{4(\bar{t}_k-\tau)}}\operatorname{d}\!y\operatorname{d}\!\tau \\[0.3cm]
&\leq&C_{n,s}\jf_{-\infty}^{\bar{t}_k}\jf_{\Sigma_\lambda}\frac{2v_k(\bar{x}^k,\bar{t}_k)-v_k(y,\tau)-v_k(y^\lambda,\tau)}{(\bar{t}_k-\tau)^{\frac{n}{2}+1+s}}e^{-\frac{|\bar{x}^k-y^\lambda|^2}{4(\bar{t}_k-\tau)}}\operatorname{d}\!y\operatorname{d}\!\tau \\[0.3cm]
&\leq&C_{n,s}2\lt(v_k(\bar{x}^k,\bar{t}_k)+\varepsilon_k\rt)\jf_{-\infty}^{\bar{t}_k}\jf_{\Sigma_\lambda}\frac{1}{(\bar{t}_k-\tau)^{\frac{n}{2}+1+s}}e^{-\frac{|\bar{x}^k-y^\lambda|^2}{4(\bar{t}_k-\tau)}}\operatorname{d}\!y\operatorname{d}\!\tau,\end{array}\right.
\end{equation}
where we have used $$
v_k(y,\tau)+v_k(y^\lambda,\tau)=-2\varepsilon_k\eta_k(\tau)\geq-2\varepsilon_k,
$$since $w_{\lambda}$ is antisymmetric, $w_{\lambda}(y,\tau) + w_{\lambda}(y^{\lambda},\tau) = 0.$ 

Applying the estimate from Section \ref{estimate} with $x_{o} = \bar{x}^{k}$, 
$t_{o} = \bar{t}_{k}$ and $r = \bar{r}_{k}={\rm {dist}}(\bar{x}^k,T_\lambda)$, we obtain from \eqref{NRP6} and \eqref{explain} that
\begin{equation}\label{2.8}
(\partial_t-\Delta)^s v_k(\bar{x}^k,\bar{t}_k)\le \frac{C(-m + \varepsilon_{k})}{\bar{r}_{k}^{2s}}.
\end{equation}
Hence, a combination of \eqref{2.5} and  \eqref{2.8} yields  that
\begin{align}\label{mp1}
(\partial_t-\Delta)^sw_\lambda(\bar{x}^k,\bar{t}_k) 
&=(\partial_t-\Delta)^s v_k(\bar{x}^k,\bar{t}_k)+\varepsilon_k(\partial_t-\Delta)^s \eta_k(\bar{t}_k)\notag\\  &\leq\fr{C_1(-m+\varepsilon_k)}{\bar{r}_k^{2s}}+\fr{C_2\varepsilon_k}{r_k^{2s}},
\end{align}
where we have used the following  estimate
\[(\partial_t-\Delta)^s \eta_k(\bar{t}_k)\leq \fr{C_2}{r_k^{2s}},\]
which follows by the same scaling argument as in~\cite[Corollary~2.2]{CM1}.

As noted above, $r_{k}^{2s}$ and $\bar{r}_{k}^{2s}$ are 
comparable. Moreover, $\varepsilon_{k} \to 0$ and thus for large $k$, $\varepsilon_{k}<m/2$ and $-m + \varepsilon_{k} \le -m/2 < 0$. Therefore, since $\varepsilon_k$ can be absorbed into the negative term 
and the constants from comparability can be incorporated into a new constant, we conclude 
from \eqref{mp1} and $-m<-m_{k}$ that for all sufficiently large $k$,
\begin{equation}\label{eq:3.11}
(\partial_{t} - \Delta)^{s} w_{\lambda}(\bar{x}^{k},\bar{t}_{k})\le \frac{C(-m)}{\bar{r}_{k}^{2s}}
\le \frac{C(-m_{k})}{\bar{r}_{k}^{2s}},
\end{equation}
where $C>0$ is a generic constant.

On the other hand, from~\eqref{smallu} we already have $u(\bar{x}^k,\bar{t}_k)<\delta$.
Because $w_\lambda(\bar{x}^k,\bar{t}_k)<0$, it follows that
$u_\lambda(\bar{x}^k,\bar{t}_k) < u(\bar{x}^k,\bar{t}_k) < \delta$.
Since $f$ is differentiable on $[0,\delta]$, we may apply the mean value theorem
to the term $f(u_\lambda)-f(u)$ and, together with~\eqref{maineq} and~\eqref{eq:3.11}, obtain
\begin{equation}
\label{mp2}
\begin{split}
\fr{C(-m_k)}{\bar{r}_k^{2s}}
 &\geq (\partial_{t} - \Delta)^{s} w_{\lambda}(\bar{x}^{k},\bar{t}_{k})\\
 &\geq  R(|\bar{x}^k|)\Big[f\big(u_\lambda(\bar{x}^k,\bar{t}_k)\big)-f\big(u(\bar{x}^k,\bar{t}_k)\big)\Big]\\
 &=
R(|\bar{x}^k|)f'\big(\xi_\lambda(\bar{x}^k,\bar{t}_k)\big)w_\lambda(\bar{x}^k,\bar{t}_k)
\end{split}
\end{equation}
where $\xi_\lambda(\bar{x}^k,\bar{t}_k)$ satisfies $u_\lambda(\bar{x}^k,\bar{t}_k)<\xi_\lambda(\bar{x}^k,\bar{t}_k)<u(\bar{x}^k,\bar{t}_k)<\delta$. So we can deduce from \eqref{cf} and $f\in C^1([0,\delta])$ that there exists a constant $\alpha>0$
such that 
\begin{equation}
\label{fs}
f'\big(\xi_\lambda(\bar{x}^k,\bar{t}_k)\big)<-\alpha \ \ \text{for $k$ large enough.}
\end{equation}
As a result, combining \eqref{NRP7}, \eqref{fs}, we see that the left-hand side of \eqref{mp2} is strictly negative because $-m_k<0$ and $\bar{r}_k^{2s}>0$, while the right-hand side satisfies
\[
R(|\bar{x}^k|)\,f'\bigl(\xi_\lambda(\bar{x}^k,\bar{t}_k)\bigr)\,w_\lambda(\bar{x}^k,\bar{t}_k) >0,
\]
since $R(|\bar{x}^k|)>0$, $f'\bigl(\xi_\lambda(\bar{x}^k,\bar{t}_k)\bigr)<-\alpha<0$ and $w_\lambda(\bar{x}^k,\bar{t}_k)<0$.
This contradiction shows that our assumption was false; hence
 $$ w_\lambda(x,t) \geq 0, \;\forall\ (x,t)\in\Sigma_\lambda\times\R$$
 for all sufficiently negative $\lambda$. This completes Step~1.
\medskip

\bigskip

\noindent\textbf{Step 2.} Step 1 provides a starting point to move the plane.
Now we move the plane $T_\lambda$ towards the right along the $x_1$-direction
as long as the inequality \eqref{mp} holds, up to its limiting position
$T_{\lambda_0}$ with $\lambda_0$ defined by
\[
\lambda_0:=\sup\{\,\lambda\leq 0\mid w_\mu\geq 0\ \text{in}\ \Sigma_\mu\times\R,\
\forall\,\mu\leq\lambda\,\}.
\]
We will show that
\[
\lambda_0=0.
\]
Suppose $\lambda_0<0$. By the definition of $\lambda_0$, there exist a sequence
$\{\lambda_k\}\searrow\lambda_0$ and positive numbers $a_k$ such that
\[
\inf_{(x,t)\in\Sigma_{\lambda_k}\times\R} w_{\lambda_k}(x,t)=:-a_k<0.
\]

\medskip

\noindent\textit{Step 2a. The infimum tends to zero.}
We claim that
\begin{equation}\label{mf4}
a_k\to 0 \quad\text{as }k\to\infty.
\end{equation}
Suppose the contrary. Then there exist $\varepsilon_0>0$ and a subsequence
(still denoted by $\{\lambda_k\}$) such that $a_k\ge\varepsilon_0$ for all $k$.
For each such $k$, pick $(z^k,\tau_k)\in\Sigma_{\lambda_k}\times\R$ satisfying
\begin{equation}\label{wkneg}
w_{\lambda_k}(z^k,\tau_k)\le -a_k+\frac{\varepsilon_0}{2}\le -\frac{\varepsilon_0}{2}<0.
\end{equation}
If $|z^k|\to\infty$ along a further subsequence, then the uniform spatial
decay~\eqref{cu} forces $w_{\lambda_k}(z^k,\tau_k)\to0$, contradicting~\eqref{wkneg}.
Hence $\{z^k\}$ is bounded. Passing to a subsequence, we may assume
$z^k\to z^0\in\overline{\Sigma}_{\lambda_0}$.

We claim that $z_1^0<\lambda_0$. Indeed, if $z_1^0=\lambda_0$, then
$|z^k-(z^k)^{\lambda_k}|\to0$ as $k\to\infty$. By the uniform continuity of $u$,
this implies $w_{\lambda_k}(z^k,\tau_k)\to0$, again contradicting~\eqref{wkneg}.
Thus $z_1^0<\lambda_0$, and for all sufficiently large $k$ we have
$z^k\in\Sigma_{\lambda_0}$.

Now, using the uniform continuity of $u$ once more,
\[
\begin{aligned}
w_{\lambda_0}(z^k,\tau_k)
&= \bigl[u(z^{k^{\lambda_0}},\tau_k)-u(z^{k^{\lambda_k}},\tau_k)\bigr] + w_{\lambda_k}(z^k,\tau_k)\\
&\le |u(z^{k^{\lambda_0}},\tau_k)-u(z^{k^{\lambda_k}},\tau_k)| - \frac{\varepsilon_0}{2}.
\end{aligned}
\]
Since $|z^{k^{\lambda_0}}-z^{k^{\lambda_k}}| = 2|\lambda_k-\lambda_0|\to0$, the
first term on the right tends to $0$ uniformly in $k$. Hence for large $k$,
\[
w_{\lambda_0}(z^k,\tau_k)\le -\frac{\varepsilon_0}{4}<0,
\]
which contradicts $w_{\lambda_0}\ge0$ in $\Sigma_{\lambda_0}\times\R$.
Therefore, \eqref{mf4} holds.

\medskip
\noindent\textit{Step 2b. Construction of the perturbed minimizer.}
By the definition of the infimum and~\eqref{mf4}, for each sufficiently large $k$
there exists $(y^k,s_k)\in\Sigma_{\lambda_k}\times\R$ such that
\[
w_{\lambda_k}(y^k,s_k) \le -a_k + a_k^2 < 0.
\]
To ensure that a minimum is attained, we perturb $w_{\lambda_k}$ near $(y^k,s_k)$
by setting
\[
v_k(x,t):=w_{\lambda_k}(x,t)-a_k^2\psi_k(x,t),\qquad (x,t)\in\R^n\times\R,
\]
where
\[
\psi_k(x,t)=\psi\Big(\frac{x-y^k}{d_k},\frac{t-s_k}{d_k^2}\Big),
\]
with $d_k=\frac12\operatorname{dist}(y^k,T_{\lambda_k})>0$ and
$\psi\in C^\infty_0(\R^n\times\R)$ a smooth cut-off function satisfying
\[
\begin{cases}
0\leq \psi\leq 1 &\text{in }\R^n\times\R,\\[2pt]
\psi= 1 &\text{in }B_{1/2}(0)\times\big[-\frac12,\frac12\big],\\[2pt]
\psi= 0 &\text{in }\big(\R^n\times\R\big)\setminus
\big(B_{1}(0)\times[-1,1]\big).
\end{cases}
\]
Denote
\[
P_k(y^k,s_k):=B_{d_k}(y^k)\times\big[s_k-d_k^{2},\,s_k+d_k^{2}\big]
\subset\Sigma_{\lambda_k}\times\R,
\]
the parabolic cylinder centered at $(y^k,s_k)$.
Since
\[
v_k(y^k,s_k)\le -a_k,\qquad
v_k(x,t)=w_{\lambda_k}(x,t)\ge -a_k\quad\text{in }
\big(\R^n\times\R\big)\setminus P_k(y^k,s_k),
\]
each $v_k$ must attain its minimum, which is at most $-a_k$, in
$\overline{P_k(y^k,s_k)}\subset\overline{\Sigma}_{\lambda_k}\times\R$.
That is, there exists $(\bar{y}^k,\bar{s}_k)\in\overline{P_k(y^k,s_k)}$ such that
\[
-a_k-a_k^2 \le v_k(\bar{y}^k,\bar{s}_k)=\inf_{\Sigma_{\lambda_k}\times\R} v_k(x,t)
\le -a_k.
\]
Consequently,
\begin{equation}\label{mf5}
-a_k \le w_{\lambda_k}(\bar{y}^k,\bar{s}_k) \le -a_k+a_k^2 < 0.
\end{equation}

Since $(\bar{y}^k,\bar{s}_k)\in\overline{P_k(y^k,s_k)}$, we have
$|\bar{y}^k-y^k|\le d_k$. Let $\bar{d}_k:=\operatorname{dist}(\bar{y}^k,T_{\lambda_k})$.
Then
\[
d_k \le \bar{d}_k \le 3d_k,
\]
so $d_k$ and $\bar{d}_k$ are comparable and may be interchanged in the estimates
below up to multiplicative constants.

By a standard scaling argument (cf.\ \cite[Corollary~2.2]{CM1}),
\[
(\partial_t-\Delta)^s\psi_k(\bar{y}^k,\bar{s}_k)\le \frac{C}{d_k^{2s}},
\]
with $C$ depending only on $n$ and $s$. Applying the same minimum argument
as in Step~1 then yields
\begin{equation}\label{eq:wkest}
(\partial_t-\Delta)^s w_{\lambda_k}(\bar{y}^k,\bar{s}_k)
\le \frac{C(-a_k+a_k^2)}{\bar{d}_k^{2s}}.
\end{equation}

\medskip
\noindent\textit{Step 2c. Boundedness of the minimizing sequence.}
Combining~\eqref{eq:wkest} with the differential inequality~\eqref{maineq} we obtain
\begin{equation}\label{ak}
\frac{C(-a_k+a_k^2)}{\bar{d}_k^{2s}}
\ge R(|\bar{y}^k|)\Big[f\big(u_{\lambda_k}(\bar{y}^k,\bar{s}_k)\big)
-f\big(u(\bar{y}^k,\bar{s}_k)\big)\Big].
\end{equation}

We prove that $\{|\bar{y}^k|\}$ is bounded. Suppose not; then along a subsequence
$|\bar{y}^k|\to+\infty$. Since $\lambda_k\to\lambda_0$ is bounded, we also have
$|(\bar{y}^k)^{\lambda_k}|\to+\infty$. By the uniform spatial decay~\eqref{cu},
for all sufficiently large $k$ we have
\[
u(\bar{y}^k,\bar{s}_k) < \delta,\qquad
u_{\lambda_k}(\bar{y}^k,\bar{s}_k) < \delta.
\]
Since $f\in C^1([0,\delta])$, the mean value theorem gives
\[
f(u_{\lambda_k}(\bar{y}^k,\bar{s}_k)) - f(u(\bar{y}^k,\bar{s}_k)) = f'(\xi_k(\bar{y}^k,\bar{s}_k))\, w_{\lambda_k}(\bar{y}^k,\bar{s}_k)
\]
with $\xi_k$ between $u_{\lambda_k}$ and $u$, and consequently by \eqref{cf}
$f'(\xi_k) \le -\sigma/2 < 0$ for $k$ large enough. Then recall $R>0$ and $w_{\lambda_k}<0$ by~\eqref{mf5}, the right-hand side of~\eqref{ak} is strictly positive. But the left-hand side
$\frac{C(-a_k+a_k^2)}{\bar{d}_k^{2s}}$ is strictly negative (since $a_k\to0$ and
$-a_k<0$ for large $k$). This contradiction shows $\{|\bar{y}^k|\}$ is bounded.

\medskip
\noindent\textit{Step 2d. Positive lower bound for $u$.}
We claim that $u(\bar{y}^k,\bar{s}_k)\ge c>0$ for all large $k$.
Suppose instead that along a subsequence $u(\bar{y}^k,\bar{s}_k)\to0$.
From~\eqref{mf4} and~\eqref{mf5} we have $w_{\lambda_k}(\bar{y}^k,\bar{s}_k)\to0$,
hence $u_{\lambda_k}(\bar{y}^k,\bar{s}_k)\to0$. Applying the mean value theorem again,
\[
f(u_{\lambda_k})-f(u) = f'(\xi_k)\,w_{\lambda_k}
\]
with $f'(\xi_k)\to f'(0)<0$. The right-hand side of~\eqref{ak} is therefore
strictly positive, while the left-hand side is strictly negative --- a contradiction.
Hence
\begin{equation}\label{positiveu}
u(\bar{y}^k,\bar{s}_k)\ge c > 0 \qquad\text{for all sufficiently large }k.
\end{equation}

\medskip
\noindent\textit{Step 2e. Contradiction via the full differential inequality.}
Now we return to the complete differential inequality~\eqref{maineq}. Instead of
discarding the term $[R(|x^{\lambda_k}|)-R(|x|)]f(u_{\lambda_k})$ as was done
in~\eqref{ak}, we keep it and obtain
\begin{equation}\label{eq:3.19}
\begin{aligned}
\frac{C(-a_k+a_k^2)}{\bar{d}_k^{2s}}
&\ge \Big[R\big(|(\bar{y}^k)^{\lambda_k}|\big)-R(|\bar{y}^k|)\Big]
      f\big(u_{\lambda_k}(\bar{y}^k,\bar{s}_k)\big) \\
&\quad + R(|\bar{y}^k|)\Big[f\big(u_{\lambda_k}(\bar{y}^k,\bar{s}_k)\big)
                      -f\big(u(\bar{y}^k,\bar{s}_k)\big)\Big].
\end{aligned}
\end{equation}

Since $\{|\bar{y}^k|\}$ is bounded and $\lambda_k\to\lambda_0<0$, we may pass
to a subsequence (still denoted by $\bar{y}^k$) such that
$\bar{y}^k\to y^*\in\overline{\Sigma}_{\lambda_0}$.
From $\operatorname{dist}(\bar{y}^k,T_{\lambda_k})\ge d_k$ and the fact that
$d_k$ is bounded away from zero (otherwise the left-hand side of~\eqref{ak}
would blow up while the right-hand side remains bounded), we infer
$\lambda_k-\bar{y}_1^k\ge d_k\ge c>0$, which yields $y_1^*\le\lambda_0-c<\lambda_0$ in the limit.
Consequently, both $y^*$ and its reflection $(y^*)^{\lambda_0}$ lie in the
half-space $\{x_1<\lambda_0\}$ and satisfy
\[
|(y^*)^{\lambda_0}| = \sqrt{(2\lambda_0-y_1^*)^2+|y'|^2}
< \sqrt{(y_1^*)^2+|y'|^2} = |y^*|
\]
because $2\lambda_0-y_1^*<y_1^*<0$ (here $y'=(y_2,\ldots,y_n)$).
By the continuity of the reflection and the convergence of $\bar{y}^k$,
this strict inequality is inherited for large $k$: there exists $\delta>0$ such that
\[
|(\bar{y}^k)^{\lambda_k}| \le |\bar{y}^k| - \delta
\qquad\text{for all sufficiently large }k.
\]
Since $R$ is strictly decreasing, we obtain
\begin{equation}\label{eq:3.20}
R\big(|(\bar{y}^k)^{\lambda_k}|\big) - R(|\bar{y}^k|) \ge C > 0
\qquad\text{for all sufficiently large }k.
\end{equation}

From~\eqref{positiveu} and~\eqref{mf5} we have
\[
u_{\lambda_k}(\bar{y}^k,\bar{s}_k)
= u(\bar{y}^k,\bar{s}_k) + w_{\lambda_k}(\bar{y}^k,\bar{s}_k)
\ge c - |w_{\lambda_k}| \ge c/2 > 0
\]
for large $k$. Since $f>0$ is continuous,
\begin{equation}\label{eq:3.21}
f\big(u_{\lambda_k}(\bar{y}^k,\bar{s}_k)\big) \ge C > 0
\end{equation}
uniformly in large $k$.

Now let $k\to+\infty$ in~\eqref{eq:3.19}. The left-hand side tends to $0$
(since $a_k\to0$ and $\bar{d}_k$ is bounded). The second term on the right-hand side
tends to $0$ because
\[
\big|f(u_{\lambda_k})-f(u)\big|
\le L|u_{\lambda_k}-u|
= L|w_{\lambda_k}|
\le L a_k \to 0
\]
by the local Lipschitz continuity of $f$ and~\eqref{mf5}.
However, by~\eqref{eq:3.20} and~\eqref{eq:3.21}, the first term satisfies
\[
\big[R\big(|(\bar{y}^k)^{\lambda_k}|\big) - R(|\bar{y}^k|)\big]\,
f\big(u_{\lambda_k}(\bar{y}^k,\bar{s}_k)\big) \ge C > 0
\]
for all large $k$. This yields a contradiction.

Therefore $\lambda_0=0$, and the plane can be moved all the way to the origin.
This completes Step~2.

\bigskip

\noindent\textbf{Step~3.} In the above step, we have shown that
$$  w_\lambda(x,t) \geq  0, \;\forall (x,t)\in\Sigma_\lambda\times\R,\;\forall \lambda\leq0.$$
Now, we will further prove that the strict inequality holds:
\be \label{mpstrict}  w_\lambda(x,t) >  0, \;\forall (x,t)\in\Sigma_\lambda\times\R,\;\forall \lambda<0. \ee
Otherwise, there exists some $\bar{\lambda}<0$ and a point $(x^o, t_o)$ in $\Sigma_{\bar{\lambda}} \times \R$, such that
$$ w_{\bar{\lambda}} (x^o, t_o) = \min_{\Sigma_{\bar{\lambda}} \times \R} w_{\bar{\lambda}} (x,t) = 0.$$
At this minimum point, we evaluate $(\partial_t-\Delta)^s w_{\bar{\lambda}}$ in two ways.
First, from the integral representation \eqref{nonlocaloper} and the antisymmetry of $w_{\bar{\lambda}}$, we have
\begin{align}\label{step3_int}
 (\partial_t-\Delta)^s w_{\bar{\lambda}} (x^o, t_o)
 &=C_{n,s}\int_{-\infty}^{t_o}\int_{\R^{n}}\frac{-w_{\bar{\lambda}}(y,\tau)}{(t_o-\tau)^{\frac{n}{2}+1+s}}e^{-\frac{|x^o-y|^2}{4(t_o-\tau)}}\operatorname{d}\!y\operatorname{d}\!\tau \nonumber\\
 &=C_{n,s}\int_{-\infty}^{t_o}\int_{\Sigma_{\bar{\lambda}}}\frac{-w_{\bar{\lambda}}(y,\tau)}{(t_o-\tau)^{\frac{n}{2}+1+s}}e^{-\frac{|x^o-y|^2}{4(t_o-\tau)}}\operatorname{d}\!y\operatorname{d}\!\tau \nonumber\\
 &\qquad + C_{n,s}\int_{-\infty}^{t_o}\int_{\Sigma_{\bar{\lambda}}}\frac{-w_{\bar{\lambda}}(y^{\bar{\lambda}},\tau)}{(t_o-\tau)^{\frac{n}{2}+1+s}}e^{-\frac{|x^o-y^{\bar{\lambda}}|^2}{4(t_o-\tau)}}\operatorname{d}\!y\operatorname{d}\!\tau \nonumber\\
 &=C_{n,s}\int_{-\infty}^{t_o}\int_{\Sigma_{\bar{\lambda}}}\frac{-w_{\bar{\lambda}}(y,\tau)}{(t_o-\tau)^{\frac{n}{2}+1+s}}\Big[e^{-\frac{|x^o-y|^2}{4(t_o-\tau)}}-e^{-\frac{|x^o-y^{\bar{\lambda}}|^2}{4(t_o-\tau)}}\Big]\operatorname{d}\!y\operatorname{d}\!\tau \nonumber\\
 &\le 0,
\end{align}
because $w_{\bar{\lambda}}\ge 0$ in $\Sigma_{\bar{\lambda}}\times\R$ and $|x^o-y|<|x^o-y^{\bar{\lambda}}|$ implies the bracket is nonnegative.

Second, from the equation \eqref{2.0} and the strict decrease of $R(|x|)$, we obtain
\begin{align}\label{step3_eq}
 (\partial_t-\Delta)^s w_{\bar{\lambda}}(x^o,t_o)
 &= R(|(x^o)^{\bar{\lambda}}|)f(u_{\bar{\lambda}}(x^o,t_o)) - R(|x^o|)f(u(x^o,t_o)) \nonumber\\
 &= \big[R(|(x^o)^{\bar{\lambda}}|)-R(|x^o|)\big]f(u_{\bar{\lambda}}(x^o,t_o)),
\end{align}
where we used $u_{\bar{\lambda}}(x^o,t_o)=u(x^o,t_o)$ because $w_{\bar{\lambda}}(x^o,t_o)=0$.
Since $\bar{\lambda}<0$ and $x^o\in\Sigma_{\bar{\lambda}}$, we have $|(x^o)^{\bar{\lambda}}|<|x^o|$, and the strict monotonicity of $R$ yields $R(|(x^o)^{\bar{\lambda}}|) > R(|x^o|)$.  Moreover $f>0$, hence the right-hand side of \eqref{step3_eq} is strictly positive.

This contradicts the non-positivity obtained in \eqref{step3_int}.  Hence such a point $(x^o,t_o)$ cannot exist, and \eqref{mpstrict} must hold.  Consequently, for each fixed $t\in\R$, $u(x,t)$ is strictly increasing in $x_1$ for $x_1<0$.

Since the choice of the \(x_1\)-direction is arbitrary, the same argument can be performed in any direction \(e\in S^{n-1}\). Hence \(u(\cdot,t)\) is symmetric with respect to every hyperplane passing through the origin. Therefore \(u(\cdot,t)\) is radially symmetric about the origin. The strict monotonicity obtained before the limiting position further implies that the radial profile is strictly decreasing in \(r=|x|\).

This completes the proof of Theorem \ref{thm1}.

\end{proof}

\section*{Acknowledgments}
The work of the author is partially supported by the National Natural Science Foundation of China (NSFC Grant No.\ 12301264) and the Henan Natural Science Foundation Project (232300421347). The author would like to thank Professor Wenxiong Chen (Yeshiva University) for helpful discussions and valuable suggestions during the preparation of this paper.

\section*{Data availability statement}
Data sharing is not applicable to this article as no new data were created or analyzed in this study.

\bibliographystyle{plain}
\bibliography{references}

\end{document}